\def\margin{2.30cm}
\title{A Dirac-type Characterization of $k$-chordal Graphs}
\author[1]{R. Krithika}
\author[2]{Rogers Mathew}
  \author[1]{N. S. Narayanaswamy}
\author[1]{N. Sadagopan}
\affil[1]{
	Department of Computer Science and Engineering, \authorcr
	Indian Institute of Technology Madras, India. \authorcr
	\{krithika, swamy, sadagopu\}@cse.iitm.ac.in
}
\affil[2]{
	Department of Computer Science and Automation, \authorcr 
	Indian Institute of Science, India. \authorcr
	rogers@csa.iisc.ernet.in
}
\date{}
\theoremstyle{plain}
\newtheorem{theorem}{Theorem}
\newtheorem{lemma}[theorem]{Lemma}
\theoremstyle{remark}
\newtheorem{obs}[theorem]{Observation}
\begin{document}
\maketitle
\begin{abstract}
Characterization of $k$-chordal graphs based on the existence of a {\em simplicial path} was shown in [Chv{\'a}tal et al. Note: Dirac-type characterizations of graphs without long chordless cycles. \emph{Discrete Mathematics, 256, 445-448}, 2002]. We give a characterization of $k$-chordal graphs which is a generalization of the known characterization of chordal graphs due to [G. A. Dirac. On rigid circuit graphs. \emph{Abh. Math. Sem. Univ. Hamburg, 25, 71-76}, 1961] that use notions of a {\em simplicial vertex} and a {\em simplicial ordering}.
\end{abstract}
\section{Introduction}
Notations and definitions are as per \cite{golu,west}.  The {\em chordality} of a graph is the size of its longest induced (chordless) cycle. Acyclic graphs are assumed to be of  chordality $0$. Low chordality graphs are known to admit rich combinatorial structure and efficient algorithms.  In particular, chordal graphs \cite{golu}, which are graphs of chordality at most 3, have a structural characterization due to Dirac \cite{dirac} that says: a graph is chordal if and only if every minimal vertex separator is a clique.  A vertex is {\em simplicial} if its neighbourhood induces a clique.  Every chordal graph has a simplicial vertex and since chordality is a hereditary property, an ordering of vertices, referred to as {\em perfect vertex elimination ordering} or {\em simplicial ordering} can be obtained.  Also, a graph is chordal if and only if it has a perfect vertex elimination ordering \cite{FulkersonGross}.  Further, chordal graphs can be recognized in linear time using this ordering. For any integer $k\geq 3$, a graph is {\em $k$-chordal} if its chordality is at most $k$. Thus, chordal graphs are precisely the class of $3$-chordal graphs. Many interesting properties of $k$-chordal graphs have been studied in the literature \cite{chvatal,hayward,spinrad,small-chord,low-chord}.  In particular, a characterization of $k$-chordal graphs based on the existence of a {\em simplicial path} is known \cite{chvatal}.  Also, $k$-chordal graphs can be recognized in $O(n^{k})$ time \cite{hayward,spinrad} and this problem is known to be coNP-complete \cite{co-np}.  In this work, we explore structural characterizations of $k$-chordal graphs via minimal vertex separator and vertex ordering.  For a set $A \subsetneq V(G)$, the neighbourhood of $A$, denoted as $N_G(A)$ is $\{x \mid x \notin A, \exists y \in A$ such that $\{x,y\} \in E(G)\}$.  The closed neighbourhood of $A$ is $N_G[A]=A \cup N_G(A)$.  Let $P_{uv}$ denote a path between $u$ and $v$. The length of $P_{uv}$, denoted as $||P_{uv}||$, is the number of edges in $P_{uv}$.  Let $d_G(u,v)$ denote the length of a shortest path between $u$ and $v$ in $G$ and it is infinity if no such path exists.  For a graph $G$, the graph $G^k$ defined on the vertex set $V(G)$ with $E(G^k)=E(G) \cup \{\{u,v\} \mid u,v \in V(G)$ and $d_G(u,v) \leq k \}$ is referred to as the $k^{th}$ power of $G$.  For a vertex $v$ in $G$, $(G-v)^k$ refers to the $k^{th}$ power of $G$ induced by the vertex set $V(G) \setminus \{v\}$.  A set $A$ of vertices is referred to as a {\em connected non-dominating set} if $G[A]$ is connected and $N_G[A] \subsetneq V(G)$.  For every integer $k \geq 3$, a vertex $v$ is \emph{$k$-simplicial} in $G$ if
\begin{enumerate}
\item[](C1) $N_G(v)$ induces a clique in $(G-v)^{(k-2)}$
\item[](C2) for every non-adjacent pair $x,y$ in the neighbourhood of $v$, every chordless path between $x$ and $y$, with internal vertices excluding $v$ and its neighbourhood, has at most $k-2$ edges.
\end{enumerate}
Note that neither of the conditions (C1) or (C2) always implies the other.
\noindent For a graph $G$ on $n$ vertices, a vertex ordering $\pi = [v_1, v_2, \ldots , v_n]$ is a \emph{$k$-simplicial ordering} or \emph{$k$-perfect vertex elimination ordering}, if, for each $i$, $v_i$ is $k$-simplicial in $G[\{v_i,\ldots , v_n\}]$.  We show that a $k$-chordal graph has a $k$-simplicial vertex and as a consequence, we obtain a characterization of $k$-chordal graphs based on $k$-simplicial ordering.  In particular, we show the following theorem:
\begin{theorem}
\label{main-thm}
For a graph $G$ and an integer $k\geq 3$, the following statements are equivalent.\\
(i) $G$ is  $k$-chordal. \\
(ii) There exists a $k$-simplicial ordering of vertices in $G$.\\
(iii) Every minimal vertex separator $S$ in $G$ is such that, for all non-adjacent $x,y \in S$ and for any two distinct connected components $S_i$ and $S_j$ in $G \setminus S$, every pair $P^{S_i}_{xy}$ and $P^{S_j}_{xy}$ of induced paths between $x$ and $y$ with internal vertices from $S_i$ and $S_j$, respectively, satisfies $||P^{S_i}_{xy}||+||P^{S_j}_{xy}|| \leq k$.  
\end{theorem}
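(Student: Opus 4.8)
I will prove the cycle of implications $(i)\Rightarrow(iii)$, $(iii)\Rightarrow(i)$, $(ii)\Rightarrow(i)$, $(i)\Rightarrow(ii)$. The first is immediate: given a minimal separator $S$, non-adjacent $x,y\in S$, distinct components $S_i,S_j$ of $G\setminus S$, and induced paths $P^{S_i}_{xy},P^{S_j}_{xy}$ with internal vertices in $S_i$ and $S_j$, I glue the two paths along $\{x,y\}$ to get a cycle $C$ of length $\|P^{S_i}_{xy}\|+\|P^{S_j}_{xy}\|\ge 4$ (each path has $\ge 2$ edges since $x\not\sim y$). Any chord of $C$ would lie inside one of the two paths (impossible, each is chordless), or join an internal vertex of $P^{S_i}_{xy}$ to one of $P^{S_j}_{xy}$ (impossible, different components), or be $xy$ (impossible); so $C$ is induced, and $k$-chordality gives $\|P^{S_i}_{xy}\|+\|P^{S_j}_{xy}\|=\|C\|\le k$.

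\textbf{$(iii)\Rightarrow(i)$, by contraposition.} Suppose $G$ has an induced cycle $C=c_0c_1\cdots c_{\ell-1}c_0$ with $\ell>k$, so $\ell\ge4$. The non-neighbours $c_2,\dots,c_{\ell-2}$ of $c_0$ span a path inside $G\setminus N_G[c_0]$, so they lie in one component $B$ of that graph; set $S:=N_G(B)$. If some $s\in N_G(B)$ were not in $N_G[c_0]$, then $s$ and its neighbour in $B$ would be adjacent inside $G\setminus N_G[c_0]$, forcing $s\in B$, a contradiction; hence $S\subseteq N_G(c_0)$. Consequently every vertex of $S$ has a neighbour in $B$ and a neighbour ($c_0$) in the component $D$ of $G\setminus S$ containing $c_0$, so $S$ is a minimal separator (a minimal $c_0$–$b$ separator for $b\in B$). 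Now $c_1,c_{\ell-1}\in N_G(B)=S$, they are non-adjacent since $\ell\ge4$, the sub-path $c_1c_2\cdots c_{\ell-1}$ of $C$ is an induced $c_1$–$c_{\ell-1}$ path of length $\ell-2$ with internal vertices in $B$, and a shortest $c_1$–$c_{\ell-1}$ path in $G[D\cup\{c_1,c_{\ell-1}\}]$ (which exists, through $c_0$) is an induced such path of length $\ge2$ with internal vertices in $D$. These violate $(iii)$, since $(\ell-2)+2>k$.

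\textbf{$(ii)\Rightarrow(i)$ and the reduction of $(i)\Rightarrow(ii)$.} For $(ii)\Rightarrow(i)$ I induct on $|V(G)|$: if $v$ is the first vertex of a $k$-simplicial ordering, then $G-v$ inherits one and is $k$-chordal by induction, and any induced cycle $C$ of $G$ of length $>k$ must contain $v$; its two neighbours $x,y$ on $C$ are non-adjacent, $C-v$ is a chordless $x$–$y$ path whose interior avoids $v$ and $N_G(v)$ (else a chord $vz$), so $(C2)$ for $v$ gives $\|C\|=\|C-v\|+2\le k$, a contradiction. For $(i)\Rightarrow(ii)$ it suffices to show every $k$-chordal graph has a $k$-simplicial vertex and then iterate, since $k$-chordality is hereditary. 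Here the key observation is that $(C2)$ is \emph{automatic} in a $k$-chordal graph: for non-adjacent $x,y\in N_G(v)$ and a chordless $x$–$y$ path $P$ with interior avoiding $v$ and $N_G(v)$, the cycle $v,x,P,y,v$ is induced of length $\|P\|+2$, so $\|P\|\le k-2$. Thus a vertex of a $k$-chordal graph is $k$-simplicial iff it satisfies $(C1)$; and $(C1)$ behaves well under restriction, since if $v$ lies in a full component $B$ of $G\setminus S$ then $N_G(v)\subseteq B\cup S$ and distances in $G-v$ do not exceed those in $G[B\cup S]-v$.

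\textbf{$(i)\Rightarrow(ii)$: the main step and its obstacle.} Induct on $|V(G)|$. If $G$ is complete, any vertex is $k$-simplicial. Otherwise pick $u$ with $G\setminus N_G[u]\ne\emptyset$, a component $B$ of $G\setminus N_G[u]$, and $S:=N_G(B)$; as above $S\subseteq N_G(u)$ and $S$ is a minimal separator, with $G_B:=G[B\cup S]$ a $k$-chordal graph on fewer vertices. One checks $S$ is a clique in $(G_B)^{(k-2)}$: a shortest $s_1$–$s_2$ path in $G[B\cup\{s_1,s_2\}]$ has interior in $B\subseteq V(G)\setminus N_G[u]$, hence avoids $u$ and $N_G(u)$, so by $(C2)$ for $u$ it has $\le k-2$ edges. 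The aim is now to find a $k$-simplicial vertex of $G$ \emph{inside} $B$. When $|B|=1$ (or, similarly, when $G[B]$ is complete) this is easy: the relevant vertex $w\in B$ has $N_G(w)\subseteq S$, and $S$ is a clique in $(G-w)^{(k-2)}$ because, using the second full component $D$ of $G\setminus S$, a shortest $s_1$–$s_2$ path through $D$ is induced, avoids $w$, and by $(i)\Rightarrow(iii)$ has $\le k-2$ edges; so $w$ satisfies $(C1)$ and is $k$-simplicial. The remaining case---$|B|\ge2$ with $G[B]$ not complete---is the crux and, I expect, the single hardest point: one must recurse (into $G[B]$, or into a component-subgraph of $G_B$) so as to land a $k$-simplicial vertex actually in $B$, the subtlety being that a vertex certified $k$-simplicial by the inductive hypothesis in a smaller graph only controls distances among its neighbours \emph{in that graph}, while $k$-simpliciality in $G$ also constrains distances between a neighbour in $B$ and a neighbour in $S$. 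Resolving this will require an extremal choice of the pair $(u,B)$---for instance taking $B$ so that $G_B$, or the deeper component-subgraph reached by the recursion, is inclusion-minimal---so that the recursion never leaves $B$; alternatively one can route through the simplicial-path structure of $k$-chordal graphs established in \cite{chvatal}. Everything outside this last case is routine once $(C2)$ is seen to be free and the $N_G(B)$-construction is available.
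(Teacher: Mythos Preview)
Your handling of $(i)\Leftrightarrow(iii)$ and $(ii)\Rightarrow(i)$ is correct and essentially matches the paper (your $(iii)\Rightarrow(i)$ is in fact more explicit than the paper's). The genuine gap is exactly where you flag it: in $(i)\Rightarrow(ii)$, once $G[B]$ is non-complete, you do not actually produce a $k$-simplicial vertex of $G$ inside $B$. Of your two proposed fixes, ``take $G_B$ inclusion-minimal'' is not developed into an argument (it is unclear what is being minimised and why the mixed-neighbour case then disappears), and ``route through \cite{chvatal}'' is not self-contained.

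The paper closes this gap with a short extremal choice that you were circling but did not land on: instead of starting from a single vertex $u$, start from a \emph{maximal connected non-dominating set} $A$ and put $B:=V(G)\setminus N_G[A]$. Maximality of $A$ forces every vertex of $B$ to be adjacent to every vertex of $N_G(A)$ (otherwise adjoin to $A$ a vertex of $N_G(A)$ that misses some vertex of $B$, contradicting maximality). Now recurse into $G[B]$ to obtain $b\in B$ that is $k$-simplicial in $G[B]$, and verify $(C1)$ for $b$ in $G$ by cases on $x,y\in N_G(b)\subseteq B\cup N_G(A)$: both in $B$ --- use the inductive hypothesis; both in $N_G(A)$ --- a shortest $x$--$y$ path through the connected set $A$ is induced, avoids $b$, and has length at most $k-2$ since appending $b$ would otherwise yield an induced cycle of length $>k$; and the mixed case $x\in B$, $y\in N_G(A)$ --- handled by the single edge $\{x,y\}$ guaranteed above. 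That last line is precisely what your setup with a single vertex $u$ cannot deliver, and it is the one idea your outline is missing.
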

\begin{proof}
(i) if and only if (ii) follows from Theorem \ref{ordering-thm} and (ii) if and only if (iii) follows from Theorem \ref{k-chordal-char}.
\end{proof}
\noindent  We note that our study generalizes Dirac's structural results on chordal graphs, in particular, Lemma 4.2 and Theorem 4.1 in the book on perfect graphs by Golumbic \cite{golu}.
\section{Characterization of $k$-chordal graphs}
\begin{obs}
\label{l-simplicial-clique} 
A $k$-simplicial vertex is also $l$-simplicial, for every integer $l > k$.  Also, for each integer $k \geq 3$, every vertex in a complete graph is $k$-simplicial. 
\end{obs}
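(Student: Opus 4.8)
The plan is to verify conditions (C1) and (C2) directly against the definition, treating the two assertions separately; both reduce to elementary monotonicity observations.

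For the first assertion, fix $k \geq 3$, let $l > k$, and suppose $v$ is $k$-simplicial in $G$. The key point I would use is that powers of a fixed graph are nested: taking the $t$-th power only adds edges between vertices at distance at most $t$ and does not alter distances, so $l-2 \geq k-2$ gives $E\bigl((G-v)^{(k-2)}\bigr) \subseteq E\bigl((G-v)^{(l-2)}\bigr)$. Consequently, if $N_G(v)$ induces a clique in $(G-v)^{(k-2)}$, it induces a clique in $(G-v)^{(l-2)}$ as well, which is (C1) for $l$. For (C2) the required bound only gets weaker: every chordless path between a non-adjacent pair $x,y \in N_G(v)$ whose internal vertices avoid $v$ and $N_G(v)$ has at most $k-2 \leq l-2$ edges, so (C2) holds for $l$. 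Hence $v$ is $l$-simplicial.

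For the second assertion, let $G$ be a complete graph and $v$ any vertex. Then $N_G(v) = V(G)\setminus\{v\}$ induces a clique already in $G-v$, hence in $(G-v)^{(k-2)}$ for every $k\geq 3$, establishing (C1). Condition (C2) is vacuously satisfied because $G$ has no pair of non-adjacent vertices at all, in particular none within $N_G(v)$. Therefore $v$ is $k$-simplicial for every $k\geq 3$.

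I do not expect a real obstacle here; the only step warranting a moment's attention is the inclusion $E\bigl((G-v)^{(k-2)}\bigr) \subseteq E\bigl((G-v)^{(l-2)}\bigr)$, where one should note that the vertex deletion is performed before the power operation (as in the definition of $k$-simplicial) and that raising the exponent never shortens a distance, so no clique relation can be destroyed. Everything else is a direct unfolding of the definitions of (C1) and (C2).
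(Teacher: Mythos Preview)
Your proposal is correct; both assertions follow exactly by the direct unfolding of (C1) and (C2) that you describe. The paper does not give a proof of this observation at all, leaving it as an immediate consequence of the definitions, and your argument is precisely the routine verification it implicitly assumes.
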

\begin{obs}
\label{condition2}
For an integer $k \geq 3$, every vertex in a $k$-chordal graph $G$ satisfies (C2). 
\end{obs}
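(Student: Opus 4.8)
The plan is to reduce (C2) directly to the definition of chordality by exhibiting an induced cycle through $v$. Fix a $k$-chordal graph $G$, a vertex $v$, and a non-adjacent pair $x,y \in N_G(v)$, and let $P$ be any chordless path between $x$ and $y$ whose internal vertices avoid $v$ and $N_G(v)$. If no such $P$ exists, then (C2) holds vacuously, so assume $P$ exists. Since $x$ and $y$ are non-adjacent, $P$ has at least two edges; moreover, since the internal vertices of $P$ lie outside $N_G(v)$ while $x,y\in N_G(v)$, the vertex $v$ does not appear on $P$. Hence the path $P$ together with the two edges $\{x,v\}$ and $\{v,y\}$ forms a cycle $C$ in $G$ of length $||P||+2 \geq 4$.

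The key step is to check that $C$ is chordless. Any chord of $C$ joins two vertices of $C$ that are non-consecutive along $C$, and there are only three types to rule out: (a) an edge between two vertices of $P$ that are non-consecutive on $P$, which is impossible because $P$ is chordless; (b) the edge $\{x,y\}$, which is impossible because $x$ and $y$ are non-adjacent by hypothesis; and (c) an edge from $v$ to an internal vertex of $P$, which is impossible because every internal vertex of $P$ lies outside $N_G(v)$. Therefore $C$ is an induced cycle of $G$.

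Finally, since $G$ is $k$-chordal, every induced cycle of $G$ has length at most $k$; applying this to $C$ yields $||P||+2 \leq k$, i.e. $||P|| \leq k-2$, which is precisely what (C2) demands. I do not expect a genuine obstacle here: the argument is essentially a one-cycle observation, and the only points requiring (minor) care are the vacuous case where no admissible path $P$ exists and the bookkeeping that guarantees $v \notin V(P)$ and that $C$ is a genuine cycle of length at least $4$ — all of which follow immediately from the standing hypotheses on $P$.
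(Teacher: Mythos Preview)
Your argument is correct and is exactly the intended one-line observation: concatenating $P$ with the edges $\{v,x\}$ and $\{v,y\}$ yields an induced cycle of length $||P||+2$, whence $k$-chordality forces $||P||\le k-2$. The paper in fact states this as an observation without proof, and your write-up supplies precisely the routine verification (chordlessness of $C$ and $|C|\ge 4$) that justifies it.
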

\begin{obs}
\label{partition}
In a non-complete graph $G$, for every vertex $x$ which is non-adjacent to at least one other vertex, there exists a connected non-dominating set containing $x$.
\end{obs}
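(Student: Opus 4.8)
The plan is simply to exhibit the singleton $A=\{x\}$ as the required set. The induced subgraph $G[\{x\}]$ is a single vertex and hence (trivially) connected, so the only thing to verify is that $N_G[\{x\}]\subsetneq V(G)$. By hypothesis there is a vertex $y\neq x$ with $\{x,y\}\notin E(G)$; then $y\notin N_G[\{x\}]$, so $N_G[\{x\}]\subseteq V(G)\setminus\{y\}\subsetneq V(G)$. Thus $\{x\}$ is a connected non-dominating set containing $x$, and the observation follows. (Note that the "non-complete" hypothesis on $G$ plays no role beyond guaranteeing the existence of such a $y$.)

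There is essentially no obstacle here: the real content of the statement is only that the family $\mathcal{F}_x$ of connected non-dominating sets containing $x$ is non-empty. If subsequent arguments need a member of $\mathcal{F}_x$ with more structure — for instance an inclusion-maximal one, or one whose closed neighbourhood misses a prescribed non-neighbour of $x$ — one can start from $\{x\}$ and greedily add vertices while keeping the induced subgraph connected and its closed neighbourhood a proper subset of $V(G)$; finiteness of $G$ forces this process to terminate at a maximal element of $\mathcal{F}_x$. Alternatively, if $x$ has a non-neighbour $z$ in the same component, one may take a shortest path from $x$ to $z$, say $x=v_0,v_1,\ldots,v_{t-1},v_t=z$ with $t\geq 2$, and set $A=\{v_0,v_1,\ldots,v_{t-2}\}$: this is a subpath (hence connected), contains $x$, and the shortest-path property guarantees $z\notin N_G[A]$, so again $A\in\mathcal{F}_x$. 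For the bare existence statement, however, the singleton $\{x\}$ already suffices, so I would present that and leave any strengthening to where it is actually used.
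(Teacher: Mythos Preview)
Your proof is correct and is exactly the paper's argument: take $A=\{x\}$, note it is connected, and use the hypothesized non-neighbour $y$ to witness $N_G[A]\subsetneq V(G)$. The additional remarks about maximal sets and shortest-path variants are unnecessary for this observation but harmless.
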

\begin{proof}
As there exists a vertex $y$ that is not adjacent to $x$, the set $A = \{x\}$ induces a connected subgraph with $y \notin N_G[A]$.  
\end{proof}
\begin{lemma}
\label{b-struct}
Let $A$ be a maximal connected non-dominating set in a non-complete graph $G$.  Every vertex in $V(G) \setminus N_G[A]$ is adjacent to every vertex in $N_G(A)$.
\end{lemma}
\begin{proof}
If there exists non-adjacent vertices $x' \in N_G(A)$ and $y' \in V(G) \setminus N_G[A]$, then the set $A' = A \cup \{x'\}$ is a connected non-dominating set in $G$ contradicting the maximality of $A$.  
\end{proof}
\begin{lemma}\label{b-simplicial}
Let $A$ be a maximal connected non-dominating set in a non-complete graph $k$-chordal graph $G$, where $k \geq 3$ is an integer.  There exists a vertex in $V(G) \setminus N_G[A]$ that is $k$-simplicial in $G$.
\end{lemma}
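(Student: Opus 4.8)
The plan is to produce the required vertex explicitly inside $B := V(G) \setminus N_G[A]$. Since, by Observation~\ref{condition2}, every vertex of the $k$-chordal graph $G$ already satisfies (C2), it suffices to find a $v \in B$ satisfying (C1), i.e.\ a $v$ such that every pair $x,y \in N_G(v)$ that is non-adjacent in $G$ has $d_{G-v}(x,y) \le k-2$. Write $S := N_G(A)$. We may assume $S \ne \emptyset$: if $S = \emptyset$ then $A$ is a connected component of $G$ and $B$ is the union of the remaining components, so applying Observation~\ref{l-simplicial-clique} to a complete such component, or the lemma inductively (on $|V(G)|$) to a non-complete one, already suffices. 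With $S \ne \emptyset$, Lemma~\ref{b-struct} tells us that every vertex of $B$ is adjacent to every vertex of $S$; moreover no vertex of $B$ has a neighbour in $A$, so for each $v \in B$ we have $N_G(v) = S \cup (N_G(v) \cap B)$. Hence a non-adjacent pair inside $N_G(v)$ is either (a) a non-adjacent pair of vertices of $S$, or (c) a non-adjacent pair of vertices of $N_G(v) \cap B$; a pair with one vertex in each of $S$ and $N_G(v) \cap B$ is an edge, again by Lemma~\ref{b-struct}.

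I would first dispose of (a), showing $d_{G-v}(x,y) \le k-2$ for \emph{every} $v \in B$. Since $x,y \in N_G(A)$ and $G[A]$ is connected, the induced subgraph $G[\{x,y\} \cup A]$ contains an $x$-$y$ path; let $P$, of length $m$, be a shortest one. Then $P$ is chordless in $G$, all its internal vertices lie in $A$, and $m \ge 2$ because $x$ and $y$ are non-adjacent. Now $v$ is adjacent to both $x$ and $y$ (Lemma~\ref{b-struct}) but to no internal vertex of $P$ (those lie in $A$), so $P$ together with $v$ forms a chordless cycle on $m+2 \ge 4$ vertices; $k$-chordality forces $m+2 \le k$. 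As $P$ avoids $v$, this gives $d_{G-v}(x,y) \le m \le k-2$. (For $k=3$ this argument shows outright that $S$ is a clique.)

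For (c) I would split on $k$. If $k \ge 4$, then for any $v \in B$ and any non-adjacent $x,y \in N_G(v) \cap B$, picking any $s \in S$ yields the path $x$-$s$-$y$ in $G-v$, so $d_{G-v}(x,y) \le 2 \le k-2$; thus every vertex of $B$ is $k$-simplicial. If $k = 3$, then $G[B]$, being an induced subgraph of a chordal graph, is chordal and hence has a simplicial vertex $v$; then $N_G(v) \cap B = N_{G[B]}(v)$ is a clique, case (c) is vacuous, and combining this with (a) and Lemma~\ref{b-struct} shows that $N_G(v)$ is a clique of $G$, so (C1) holds. Either way we obtain a $k$-simplicial vertex of $G$ lying in $V(G) \setminus N_G[A]$.

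The heart of the argument, and the step I expect to be the main obstacle, is (a): the observation that two non-adjacent vertices of the separator $S$ are forced to be within distance $k-2$ in $G-v$, which works precisely because the cycle closing the $A$-path can be completed through a vertex of $B$, which by construction is non-adjacent to all of $A$. Everything else is bookkeeping, with the case $k=3$ recovering the classical Dirac-style reasoning behind Lemma~4.2 of \cite{golu}.
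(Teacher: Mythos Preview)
Your argument is correct. The treatment of pairs inside $S=N_G(A)$ (your case (a)) and of mixed pairs is identical to the paper's, including the chordless-cycle bound $m+2\le k$ obtained by closing an $A$-path through a vertex of $B$.

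Where you diverge is in case (c), pairs $x,y\in N_G(v)\cap B$. The paper proceeds uniformly by induction on $|V(G)|$: it finds a vertex $b$ that is $k$-simplicial in $G[B]$ (by Observation~\ref{l-simplicial-clique} if $B$ is a clique, otherwise by the induction hypothesis applied to $G[B]$) and then uses the (C1) witness paths inside $G[B\setminus\{b\}]$ for such pairs. You instead observe that when $k\ge 4$ and $S\neq\emptyset$, the $2$-path $x\,s\,y$ through any $s\in S$ already gives $d_{G-v}(x,y)\le 2\le k-2$, so \emph{every} vertex of $B$ satisfies (C1); for $k=3$ you invoke the classical existence of a simplicial vertex in the chordal graph $G[B]$, together with your earlier remark that (a) forces $S$ to be a clique. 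This is a genuinely different route: it avoids the inductive step in the main (connected) case for $k\ge 4$ and yields the stronger conclusion that all of $B$ is $k$-simplicial there, at the cost of treating $k=3$ separately via Dirac's theorem and still needing a short induction for the disconnected case $S=\emptyset$. The paper's uniform induction, by contrast, handles all $k\ge 3$ in one stroke and is self-contained, but only produces a single $k$-simplicial vertex in $B$.
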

\begin{proof}
We prove the lemma by induction on $|V(G)|$.  It is easy to verify that the statement of the lemma is true when $|V(G)| \leq 3$. We assume the statement to be true for all $k$-chordal graphs having less than $n$ vertices, where $n \geq 4$. Consider a non-complete $k$-chordal graph $G$ on $n$ vertices.  Let $A$ be a maximal connected non-dominating set in $G$ and $B$ denote $V(G) \setminus N_G[A]$.  If $B$ is a clique, then by Observation \ref{l-simplicial-clique}, every vertex in $B$ in $k$-simplicial in $G[B]$. Otherwise, by induction hypothesis, there exists a vertex that is $k$-simplicial in $G[B]$.  Let $b \in B$ be $k$-simplicial in $G[B]$ and we now show that $b$ is $k$-simplicial in $G$ too.  Since $b \in B$, every vertex in $N_G(b)$ is either in $B$ or in $N_G(A)$. Let $x,y \in N_G(b)$. 
\\ \textbf{Case ($x,y \in B$)}: Since $b$ is $k$-simplicial in $G[B]$, there exists a path $P_{xy}$ in $G[B\setminus\{b\}]$ and thereby in $G[V(G)\setminus\{b\}]$ such that $||P_{xy}|| \leq (k-2)$. 
\\ \textbf{Case ($x \in N_G(A), y \in B$) or ($x \in B, y \in N_G(A)$)}: Since every vertex in $N_G(A)$ is adjacent with every vertex in $B$ by Lemma \ref{b-struct}, we can take the path $P_{xy}$ to be the edge $\{x,y\}$. 
\\\textbf{Case ($x,y \in N_G(A)$)}: If $\{x,y\} \in E(G)$, then the edge $\{x,y\}$ itself can be thought of as the path $P_{xy}$. Suppose $\{x,y\} \notin E(G)$. Since $A$ is connected and $x,y \in N_G(A)$, there exists a path between $x$ and $y$ whose every internal vertex is from $A$. Let $P_{xy}$ be the shortest of all such paths. Clearly, $P_{xy}$ is present in $G[V(G)\setminus \{b\}]$.  We claim that $||P_{xy}|| \leq k-2$. Suppose $||P_{xy}|| > k-2$. Then, $C = b P_{xy} b$ is an induced cycle of length at least $k+1$.  This contradicts the fact that $G$ is $k$-chordal. 
\end{proof}
\begin{lemma}
\label{simplicial}
For an integer $k \geq 3$, every $k$-chordal graph $G$ has a $k$-simplicial vertex. Moreover, if $G$ is not a complete graph, then it has two non-adjacent $k$-simplicial vertices. 
\end{lemma}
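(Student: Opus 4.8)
The plan is to derive Lemma~\ref{simplicial} as an easy consequence of Lemma~\ref{b-simplicial} together with Observations~\ref{l-simplicial-clique} and~\ref{partition}. First I would dispose of the complete case: if $G$ is complete, then by Observation~\ref{l-simplicial-clique} every vertex of $G$ is $k$-simplicial, so the first assertion holds trivially. Hence assume $G$ is not complete; in particular $G$ has at least two vertices that are each non-adjacent to some other vertex.

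For the first assertion in the non-complete case, pick any vertex $x$ that is non-adjacent to at least one other vertex. By Observation~\ref{partition} there is a connected non-dominating set containing $x$; extend it to a \emph{maximal} connected non-dominating set $A$ (this is possible since $V(G)$ itself is not non-dominating, so the family of connected non-dominating sets has maximal elements). Now apply Lemma~\ref{b-simplicial} to $A$: there is a vertex in $V(G)\setminus N_G[A]$ that is $k$-simplicial in $G$. In particular $G$ has a $k$-simplicial vertex.

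For the ``moreover'' part, the key point is to produce a \emph{second} $k$-simplicial vertex that is non-adjacent to the first one. The idea is to run the argument twice, from two ``opposite'' sides. Let $v$ be the $k$-simplicial vertex found above, lying in $B := V(G)\setminus N_G[A]$. Since $v \in B$ and $A \neq \emptyset$, $v$ is non-adjacent to every vertex of $A$; pick some $a \in A$. Now $a$ is non-adjacent to at least one vertex (namely $v$), so by Observation~\ref{partition} there is a connected non-dominating set containing $a$; extend it to a maximal connected non-dominating set $A'$. Applying Lemma~\ref{b-simplicial} to $A'$ yields a $k$-simplicial vertex $v' \in V(G)\setminus N_G[A']$. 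It remains to check that $v'$ and $v$ are non-adjacent. Here one observes that the natural choice of $A'$ can be taken inside $N_G[A]$: indeed, one may instead run Lemma~\ref{b-simplicial} starting from a maximal connected non-dominating set $A'$ that \emph{contains} $v$ together with $A$ on its complement side --- more precisely, since $N_G[A] \subsetneq V(G)$, one can choose a maximal connected non-dominating set $A'$ with $N_G[A'] \supseteq \{v\}\cup N_G(v)$, forcing the $k$-simplicial vertex $v'$ produced for $A'$ to lie outside $N_G[v]$, hence non-adjacent to $v$.

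The main obstacle I anticipate is precisely the bookkeeping in this last step: arranging the two maximal connected non-dominating sets so that the two $k$-simplicial vertices they yield are guaranteed to be non-adjacent. The clean way is to fix a $k$-simplicial vertex $v \in B = V(G)\setminus N_G[A]$ from the first application, note that $A$ is a connected non-dominating set avoiding $N_G[v]$ is too strong --- rather, one wants a maximal connected non-dominating set $A'$ whose closed neighbourhood contains all of $N_G[v]$, and then Lemma~\ref{b-simplicial} applied to $A'$ delivers a $k$-simplicial vertex $v' \in V(G)\setminus N_G[A'] \subseteq V(G)\setminus N_G[v]$. Existence of such an $A'$ follows because $\{v\}$ is not dominating (as $G$ is not complete and... ) --- one must be slightly careful, but since $v$ itself has a non-neighbour, Observation~\ref{partition} applied to a suitable vertex, followed by maximal extension while keeping $N_G[v]$ inside the closed neighbourhood, should go through. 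Once both $v$ and $v'$ are in hand with $v' \notin N_G[v]$, the ``moreover'' conclusion is immediate.
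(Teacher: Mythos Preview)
Your approach is essentially the paper's, and the idea you eventually circle around---taking $A'$ to be a maximal connected non-dominating set \emph{containing $v$}---is exactly what the paper does and is all you need: since $v \in B = V(G)\setminus N_G[A]$ is non-adjacent to every vertex of $A$, Observation~\ref{partition} lets you start from $\{v\}$ and extend to a maximal such $A'$; then the $k$-simplicial vertex $v' \in V(G)\setminus N_G[A']$ supplied by Lemma~\ref{b-simplicial} is automatically non-adjacent to $v$, simply because $v \in A' \subseteq N_G[A']$. Your first attempt (extending from some $a \in A$) indeed does not force $v' \notin N_G[v]$, as you noticed, and the detour through ``arranging $N_G[A'] \supseteq N_G[v]$'' by other means is unnecessary---putting $v$ itself into $A'$ achieves this for free. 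Commit to that version and drop the hedging.
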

\begin{proof}
If $G$ is a complete graph, then by Observation \ref{l-simplicial-clique}, every vertex in $G$ is $k$-simplicial. Suppose $G$ is not a complete graph.  Let $A$ be a maximal connected non-dominating set in $G$ and $B$ denote $V(G) \setminus N_G[A]$.  The sets $A$ and $B$ are well-defined by Observation \ref{partition}.  By Lemma \ref{b-simplicial}, there exists a vertex $u \in B$ that is $k$-simplicial in $G$.  Now, let $A'$ be a maximal connected non-dominating set in $G$ containing $u$ and $B'$ denote $V(G) \setminus N_G[A']$.  The sets $A'$ and $B'$ are well-defined by Observation \ref{partition} as there exists a vertex $w \in A$ that is not adjacent to $u$.  By Lemma \ref{b-simplicial}, there exists a vertex $v \in B'$ that is $k$-simplicial in $G$.  Thus, $u$ and $v$ are two non-adjacent $k$-simplicial vertices in $G$. 
\end{proof}
\noindent As $k$-chordality is a hereditary property, we obtain the following characterization of $k$-chordal graphs. 
\begin{theorem}
\label{ordering-thm}
For an integer $k \geq 3$, a graph $G$ is $k$-chordal if and only if $G$ has a $k$-simplicial ordering. 
\end{theorem}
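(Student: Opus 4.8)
The plan is to prove the two implications separately: the forward direction by a one-line induction built on Lemma~\ref{simplicial}, and the backward direction by a short contradiction argument that uses only condition (C2).

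For ($\Rightarrow$), suppose $G$ is $k$-chordal. I would induct on $|V(G)|$, the case $|V(G)| \le 1$ being trivial. By Lemma~\ref{simplicial} there is a $k$-simplicial vertex $v_1$ of $G$. Since $k$-chordality is hereditary, $G - v_1$ is $k$-chordal, so by the induction hypothesis it has a $k$-simplicial ordering $[v_2, \ldots, v_n]$. Then $[v_1, v_2, \ldots, v_n]$ is a $k$-simplicial ordering of $G$: the vertex $v_1$ is $k$-simplicial in $G[\{v_1,\ldots,v_n\}] = G$ by choice, and for each $i \ge 2$ the graph $G[\{v_i,\ldots,v_n\}]$ is the same whether it is taken inside $G$ or inside $G - v_1$, so $v_i$ is $k$-simplicial there by the induction hypothesis.

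For ($\Leftarrow$), suppose $G$ has a $k$-simplicial ordering $\pi = [v_1,\ldots,v_n]$ but, aiming at a contradiction, $G$ is not $k$-chordal; then it contains an induced cycle $C$ with $||C|| \ge k+1 \ge 4$. Let $v_i$ be the vertex of $C$ that occurs first in $\pi$ and put $G' = G[\{v_i,\ldots,v_n\}]$, so that all of $C$ lies in $G'$ and $v_i$ is $k$-simplicial in $G'$. Let $x,y$ be the two neighbours of $v_i$ on $C$; since $C$ is induced and $||C|| \ge 4$, the vertices $x$ and $y$ are non-adjacent, and $x,y \in N_{G'}(v_i)$. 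Now take $P = C - v_i$, the $x$--$y$ path obtained by deleting $v_i$ from $C$. As $C$ is an induced cycle, $P$ is chordless, and no internal vertex of $P$ is adjacent to $v_i$ (such an edge would be a chord of $C$, since that vertex differs from $x$ and $y$), so the internal vertices of $P$ avoid $v_i$ together with $N_{G'}(v_i)$. Hence $P$ is a chordless $x$--$y$ path of length $||C|| - 2 \ge k-1 > k-2$ whose internal vertices exclude $v_i$ and its neighbourhood, contradicting (C2) for $v_i$ in $G'$. This forces $G$ to be $k$-chordal and finishes the proof.

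I do not expect any genuinely hard step here. The one place calling for care is the backward direction: one must check that $C - v_i$ satisfies \emph{every} hypothesis of (C2), and in particular that its internal vertices stay out of $N_{G'}(v_i)$ — which is exactly where the induced-ness of $C$ is used. It is also worth noting that (C2) alone drives the ($\Leftarrow$) direction, while (C1) enters only indirectly, through the proof of Lemma~\ref{simplicial} invoked for ($\Rightarrow$).
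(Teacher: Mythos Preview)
Your proposal is correct and follows essentially the same route as the paper: the forward direction is the same hereditary/induction argument built on Lemma~\ref{simplicial}, and the backward direction is the same contradiction via the earliest vertex $v_i$ of an offending induced cycle, using (C2) on the chordless $x$--$y$ path $C-v_i$. If anything, you are slightly more careful than the paper in explicitly verifying that $x,y$ are non-adjacent and that the internal vertices of $C-v_i$ avoid $N_{G'}(v_i)$ by induced-ness of $C$.
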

\begin{proof} 
$(\Rightarrow)$
As every induced subgraph of a $k$-chordal graph is $k$-chordal, the proof follows from Lemma \ref{simplicial}.   
\\
\noindent$(\Leftarrow)$Consider a $k$-simplicial ordering $\pi = [v_1,v_2,\cdots,v_n]$ of $G$. If $G$ is not $k$-chordal then there exists an induced cycle $C=(v_i, v_j, \ldots, v_l, v_i)$ of length at least $(k+1)$ with $v_i$ being the minimum labelled vertex in $C$ as per $\pi$. Clearly, the $2$ neighbours $v_j$ and $v_l$ of $v_i$ in $C$ satisfy $j,l > i$ by the choice of $v_i$. As $v_i$ is $k$-simplicial, there are no chordless paths between $v_j$ and $v_l$ of length greater than $(k-2)$ with internal vertices from $V(G) \setminus (\{v_1, \ldots, v_{i-1}\} \cup N_G[v_i])$ in $G[\{v_i,v_{i+1},\ldots ,v_n\}]$.  However, the path $P=v_j, \ldots, v_l$ in $C$ is one such path of length at least $(k-1)$, contradicting that $v_i$ is $k$-simplicial in $G[v_i, \ldots v_n]$. 
\end{proof}
\noindent It is well-known that a graph is chordal if and only if every minimal vertex separator induces a clique.  In the subsequent discussion, we generalize this result and characterize $k$-chordal graphs based on their minimal vertex separators.  We use terms minimal vertex separators and minimal $(a,b)$-vertex separators interchangeably and the pair $(a,b)$ under consideration will be clear from the context.  
\begin{theorem}
\label{k-chordal-char}
Let $k\geq 3$ be an integer. A graph $G$ is $k$-chordal if and only if for all minimal vertex separators $S$ in $G$, for all non-adjacent $x,y \in S$ and for any two distinct connected components $S_i$ and $S_j$ in $G \setminus S$, every pair $P^{S_i}_{xy}$ and $P^{S_j}_{xy}$ of induced paths between $x$ and $y$ with internal vertices from $S_i$ and $S_j$, respectively, satisfies $||P^{S_i}_{xy}||+||P^{S_j}_{xy}|| \leq k$.
\end{theorem}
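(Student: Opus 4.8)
The plan is to prove both directions of the biconditional in Theorem~\ref{k-chordal-char} directly, relying on the structural results already established, especially Lemma~\ref{simplicial} and the analysis of $k$-simplicial vertices.

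\medskip
\noindent\textbf{($\Rightarrow$) direction.} Assume $G$ is $k$-chordal, and suppose for contradiction that there is a minimal $(a,b)$-vertex separator $S$, non-adjacent $x,y\in S$, distinct components $S_i,S_j$ of $G\setminus S$, and induced paths $P^{S_i}_{xy}$ (internal vertices in $S_i$) and $P^{S_j}_{xy}$ (internal vertices in $S_j$) with $\|P^{S_i}_{xy}\|+\|P^{S_j}_{xy}\|\geq k+1$. The natural move is to glue these two paths at their common endpoints $x$ and $y$ to form a cycle $C$ of length $\|P^{S_i}_{xy}\|+\|P^{S_j}_{xy}\|\geq k+1$. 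I would then argue $C$ is induced: any chord would have to join a vertex of $S_i$ to a vertex of $S_j$ (since each $P^{S_i}_{xy}$, $P^{S_j}_{xy}$ is itself induced, and $x,y$ are non-adjacent), but $S$ separates $S_i$ from $S_j$ so there is no such edge. Hence $C$ is an induced cycle of length $\geq k+1$, contradicting $k$-chordality. The only subtlety here is handling the degenerate cases where one of the paths has length $1$ (i.e.\ is the edge $\{x,y\}$) — but that cannot happen since $x,y$ are assumed non-adjacent — or length $2$, where the cycle could still be short; these are ruled out by the length-sum bound being $\geq k+1\geq 4$.

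\medskip
\noindent\textbf{($\Leftarrow$) direction.} Assume the separator condition holds; I want to show $G$ is $k$-chordal, i.e.\ every induced cycle has length at most $k$. Suppose not, and let $C$ be an induced cycle with $\|C\|\geq k+1$. Pick any vertex $v$ on $C$ with neighbours $x,y$ on $C$; then $x,y$ are non-adjacent (as $\|C\|\geq 4$ and $C$ is induced), and the ``long arc'' of $C$ from $x$ to $y$ avoiding $v$ is an induced path $Q$ of length $\geq k-1$. The strategy is to locate a minimal vertex separator $S$ for which this long arc, or a closely related path, realizes a violation of the stated condition. A clean way: let $S$ be a minimal $(x,y)$-separator contained in $N_G(v)\cup\{$relevant vertices$\}$ — more precisely, since $x$ and $y$ are non-adjacent, take a minimal $(x,y)$-vertex separator $S$; one of the components of $G\setminus S$ contains $v$, call it $S_v$, and there is an induced path $P^{S_v}_{xy}$ through $S_v$ (via the short arc $x,v,y$ of $C$, which has length $2$ and has all internal vertices, namely $v$, in $S_v$ — after checking $v\in S_v$), and another component $S_j$ meeting the long arc. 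Then the long arc of $C$ gives (or contains) an induced $x$–$y$ path with internal vertices in $S_j$ of length $\geq k-1$, so the length sum is $\geq (k-1)+2 = k+1 > k$, contradicting the hypothesis.

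\medskip
\noindent\textbf{Main obstacle.} The delicate part is the ($\Leftarrow$) direction: I must ensure that the minimal separator $S$ can be chosen so that $v$ lies in one component and the long arc passes (entirely, or in an induced sub-path) through a \emph{different} component, and that the short arc $x,v,y$ really has its internal vertex $v$ in $S_v$ rather than in $S$. This requires care: $v$ might not be separated from the long arc by an arbitrary minimal $(x,y)$-separator, so I would instead work with a minimal $(x,y)$-separator $S$ chosen within $N_G[v]\setminus\{x,y\}$ intersected appropriately, or argue via the standard fact that between non-adjacent $x,y$ on an induced cycle, some minimal $(x,y)$-separator must intersect \emph{both} arcs of $C$, placing $v$ and the long-arc interior in distinct components. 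Pinning down this separator and verifying the component membership of $v$ and of the long-arc vertices — including the possibility that the long arc itself meets $S$ and must be replaced by an induced sub-path from $x$ to the first arc-vertex in some component and symmetrically from $y$ — is the crux; once that is set up, the length bookkeeping is immediate.
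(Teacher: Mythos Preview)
Your ($\Rightarrow$) direction is correct and essentially identical to the paper's: the two induced paths concatenate to an induced cycle of length at least $k+1$, and any chord would have to cross between $S_i$ and $S_j$, which is impossible since $S$ separates them.

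For ($\Leftarrow$) your plan contains a genuine error, not merely an unresolved subtlety. With $v$ on $C$ and $x,y$ its cycle-neighbours, every $(x,y)$-separator must meet the path $x\,v\,y$, whose only internal vertex is $v$; hence $v\in S$ for \emph{every} minimal $(x,y)$-separator $S$. Thus $v$ can never lie in a component of $G\setminus S$, and the short arc $x\,v\,y$ can never play the role of a path $P^{S_v}_{xy}$ with its internal vertex inside a component. Neither of your proposed workarounds escapes this: restricting $S$ to $N_G[v]\setminus\{x,y\}$ still forces $v\in S$, and the ``standard fact'' you invoke about a minimal $(x,y)$-separator meeting both arcs of $C$ is precisely the statement that $v\in S$.

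The paper turns this obstruction into the argument. In your notation it takes the same minimal $(x,y)$-separator $S$, uses the forced membership $v\in S$, and observes that $S$ must also contain some vertex $z$ of the long arc (since that arc is another $x$--$y$ path). Now $v,z\in S$ are non-adjacent (as $C$ is induced of length $\geq k+1\geq 4$), $x$ and $y$ lie in distinct components $S_i,S_j$ of $G\setminus S$, and the two arcs of $C$ between $v$ and $z$ furnish induced $v$--$z$ paths through $S_i$ and $S_j$ whose lengths sum to $|C|\geq k+1$. The key idea you are missing is to place $v$ \emph{in} the separator and look for a second separator-vertex $z$ on the cycle, rather than trying to keep $v$ in a component.
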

\begin{proof}
$(\Rightarrow)$ Assume on the contrary that there exists a minimal vertex separator $S$ in $G$ containing two non-adjacent vertices $x$ and $y$ such that there exists chordless paths $P^{S_i}_{xy}$ and $P^{S_j}_{xy}$ with $||P^{S_i}_{xy}||+||P^{S_j}_{xy}|| > k$.  Clearly, $V(P^{S_i}_{xy}) \cup V(P^{S_j}_{xy})$ induces a cycle of length at least $k+1$ contradicting the fact that $G$ is $k$-chordal. \\ \noindent$(\Leftarrow)$ Suppose there exists an induced cycle $C$ of length at least $k+1$ in $G$.  Let $C=(x,a,y_1,\ldots,y_{(l-2)}=b,x), l \geq 3$.   We observe that any minimal $(a,b)$-vertex separator $S$ in $G$ must contain $x$ and some $y_r=z, 1 \leq r \leq (l-3)$.  This shows that there exists chordless paths $P^{S_i}_{xz}$ and $P^{S_j}_{xz}$ such that $||P^{S_i}_{xz}||+||P^{S_j}_{xz}|| \geq k+1$ contradicting our hypothesis.  Therefore, as claimed, $G$ is $k$-chordal.    
\end{proof}  
\bibliographystyle{plain}
\bibliography{ref}
\end{document}